\newtheorem{theorem}{Theorem}[section]
\newtheorem{lemma}{Lemma}[section]
\newdefinition{definition}{Definition}[section]
\newdefinition{example}{Example}[section]
\numberwithin{equation}{section}
\begin{document}
\begin{frontmatter}

\title{Maximum-norm error analysis of compact difference schemes for the backward fractional Feynman-Kac equation }
\author[nwpu,hut]{Jiahui Hu}
\ead{hujh@mail.nwpu.edu.cn}
\author[nwpu]{Jungang Wang}
\author[nwpu]{Zhanbin Yuan}
\author[nwpu]{Zongze Yang}
\author[nwpu]{Yufeng Nie\corref{cor1}}
\ead{yfnie@nwpu.edu.cn}

\cortext[cor1]{Corresponding author}
\address[nwpu]{Research Center for Computational Science, Northwestern Polytechnical University, Xi'an 710129, China}
\address[hut]{College of Science, Henan University of Technology, Zhengzhou 450001, China}
\begin{abstract}

  The fractional Feynman-Kac equations describe the
  distribution of functionals of non-Brownian motion, or
  anomalous diffusion, including two types called the forward and backward fractional Feynman-Kac equations,
  where the fractional substantial derivative is involved.
  This paper focuses on the more widely used backward version. Based on the discretized schemes for fractional substantial
  derivatives proposed recently, we construct compact finite difference schemes for the backward fractional
  Feynman-Kac equation, which has $q$-th $(q=1,2,3,4)$ order accuracy in temporal direction and fourth order accuracy in spatial
  direction, respectively. In the case $q=1$, the numerical stability and convergence of the difference scheme
  in the discrete $L^\infty$ norm are proved strictly, where a new inner product is defined for the theoretical analysis.
  Finally, numerical examples are provided to verify the effectiveness and accuracy
  of the algorithms.

\end{abstract}
\begin{keyword}
Backward fractional Feynman-Kac equation \sep
Fractional substantial derivative \sep
  Compact scheme \sep
  Maximum-norm error analysis\sep
  Stability \sep
  Convergence
  \MSC[2010] 35R11\sep 65M06\sep 65M12
\end{keyword}
\end{frontmatter}


\section[Introduction]{Introduction}\label{sec:introduciton}
Diffusive motions exist widely in the nature, among the fields from condensed matter physics \cite{comtet2005functionals, FoltinOerdingRaczEtAl1994,HummerSzabo2001},
to hydrodynamics \cite{BauleFriedrich2006}, meteorology \cite{MajumdarBray2002}, and finance \cite{Yor2001,Yor2012}.
Thus the Brownian functionals play important role in science community.
Assume $x(t)$ is a path of a Brownian particle in the time interval $(0,t)$, and $U(x)$ is some prescribed function.
Then a Brownian functional can be defined as $A=\int_0^tU[x(\tau)]d\tau$. $A$ is a random variable for $x(t)$ is a random path.
On account of the diversity of the function $U(x)$ being chosen, the Brownian functional $A$ models different phenomena.
In 1949, by using Feynman's path integral method Kac derived the (imaginary time) Schr\"{o}dinger equation for
the distribution function of $A$ \cite{kac1949}. 
However, in recent years, by realizing that
numerous anomalous diffusion phenomena exist widely in many systems,
the scientists pay more and more attention to the anomalous diffusion processes,
or the non-Brownian motion, which can be modeled more exactly by fractional differential equations.

Let $x(t)$ be a trajectory of non-Brownian particle.
The functional of anomalous diffusion has the same form as the Brownian functional
\begin{equation}\label{eq:func}
A=\int_0^t U[x(\tau)]d\tau.
\end{equation}
For the different prescribed function $U(x)$, various
physics processes can be characterized. For instance, when taking $U(x)=1$ in a given domain and to be zero otherwise,
$A$ models the time spent by a particle in the domain. The corresponding functional can be used in kinetic studies of chemical
reactions that take place exclusively in the domain \cite{Agmon1984,CarmiTurgemanBarkai2010}. When the motion of the particles is non-Brownian in dispersive systems with inhomogeneous disorder, we take $U(x)=x$ or $x^2$ \cite{CarmiTurgemanBarkai2010}.
By employing a versatile framework for describing the motion of particles in disordered systems, i.e.,
the continuous time random walk (CTRW),
Carmi, Turgeman, and Barkai
derived the forward and backward fractional Feynman-Kac equations \cite{CarmiTurgemanBarkai2010,CarmiBarkai2011,TurgemanCarmiBarkai2009},
where the fractional substantial derivative is involved.
Both forward and backward fractional Feynman-Kac equations describe the
distributions of functionals of the widely observed subdiffusive processes.
While in most cases, scholars are only interested in the distribution of the functional $A$ and
regardless of the final position of the particle, $x$,
it turns out to be more convenient to use the backward version.
The backward
fractional Feynman-Kac equation which is shown as following, will be discussed detailedly in our work.
Denote by $P(x,A,t)$ the probability density function (PDF) of $A$ at time $t$, given that the process has started at $x$.
The backward fractional Feynman-Kac equation is given as \cite{CarmiTurgemanBarkai2010,CarmiBarkai2011,TurgemanCarmiBarkai2009}
\begin{equation}\label{eq:bfkk}
\frac{\partial}{\partial t}P(x,\rho,t)=K_\alpha {}^{s}D_t^{1-\alpha}\frac{\partial^2}{\partial x^2}P(x,\rho,t)
-\rho U(x)P(x,\rho,t),
\end{equation}
where $P(x,\rho,t):=\int_0^\infty P(x,A,t)e^{-\rho A}dA$, $\mathfrak{R}{(\rho)}>0$, $U(x)\geq0$;
the functional $A$ is defined as~\eqref{eq:func} and $\alpha \in (0,1)$;
the diffusion coefficient $K_\alpha$ is a positive constant, and
the symbol ${}^{s}D_t^{\nu}$ represents the Friedrich's fractional substantial derivative of order $\nu$
\cite{FriedrichJenkoBauleEtAl2006}.

For the past few years, the numerical methods for solving fractional partial differential equations (PDEs) have been well developed, including finite difference methods \cite{ChenDengWu2013,ChenLiuZhuangEtAl2009,MeerschaertTadjeran2006,SunWu2006,Gao2011},
finite element methods \cite{Deng2008,Ervin2006,JiangMa2011},
and spectral methods \cite{LiZengLiu2012,LiXu2009}, etc.
However, for the PDEs with fractional substantial derivative, though there have been some works for getting the numerical
solutions \cite{DengChenBarkai2015}, high order finite difference schemes with the maximum norm error estimates
are still scarce. As is well-known, high order schemes lead to
more accurate results if the solution of the equation is regular enough.
The fractional substantial derivative is a non-local time-space coupled operator, which makes numerically solving the corresponding equations more difficult than other fractional PDEs, especially when using high order schemes. Besides,
compared with the error estimates in discrete $L^2$ norm, the discrete $L^\infty$ norm error estimates provide more immediate insight
on the error occurring during time evolution. Thus, in practice,
error estimates
in the grid independent maximum norm are preferred in numerical analysis.
The purpose of this paper is to develop high order compact difference schemes for the backward fractional
Feynman-Kac equation
and provide a rigorous
error analysis in the discrete $L^\infty$ norm by the strategy of introducing some kind of new inner product and corresponding norms.


The definitions of fractional substantial calculus are given as follows \cite{ChenDeng2013}.
\begin{definition}
Let $\nu >0$, $\rho$ be a constant, and $P(t)$ be piecewise continuous on $(0,\infty)$ and integrable
on any finite subinterval of $[0,\infty)$. Then the fractional substantial integral of $P(t)$ of order $\nu$ is defined as
\begin{equation}
{}^{s}I_t^\nu P(t)=\frac{1}{\Gamma(\nu)}\int_0^t (t-\tau)^{\nu-1}e^{-\rho U(x)(t-\tau)}P(\tau)d\tau,\qquad t>0,
\end{equation}
where $U(x)$ is a prescribed function in~\eqref{eq:func}.
\end{definition}

\begin{definition}
Let $\mu >0$, $\rho$ be a constant, and $P(t)$ be $(m-1)$-times continuously differentiable on $(0,\infty)$ and its $m$-times derivative be integrable on any finite subinterval of $[0,\infty)$, where $m$ is the smallest integer that exceeds $\mu$. Then the fractional substantial derivative of $P(t)$ of order $\mu$ is defined as
\begin{equation}
{}^sD_t^\mu P(t)={}^sD_t^m\lbrack {}^sI_t^{m-\mu}P(t)\rbrack,
\end{equation}
where
\begin{equation}
{}^sD_t^m=\left(\frac{\partial}{\partial t}+\rho U(x)\right)^m.
\end{equation}
\end{definition}

According to the definition of fractional substantial derivative, equation~\eqref{eq:bfkk} can be expressed as
the following form
\begin{equation}\label{eq:ebfkk}
{}^sD_t^1 P(x,\rho,t)={}^sD_t^{1-\alpha}\left [ K_\alpha\frac{\partial^2}{\partial x^2}P(x,\rho,t)\right ].
\end{equation}
Denote by ${}_c^sD_t^\alpha$ the Caputo fractional substantial derivative \cite{ChenDeng2013}, i.e.,
\begin{equation}
\begin{aligned}
{}_c^sD_t^\alpha P(x,t)&:={}^sD_t^\alpha\left[P(x,t)-e^{-\rho U(x)t}P(x,0)\right]\\
&={}^sD_t^\alpha P(x,t)-\frac{t^{-\alpha}e^{-\rho U(x)t}}{\Gamma(1-\alpha)}P(x,0),\nonumber
\end{aligned}
\end{equation}
then the equivalent form of~\eqref{eq:ebfkk} can be written as \cite{DengChenBarkai2015}
\begin{equation}
\begin{aligned}
{}_c^sD_t^\alpha P(x,t)=K_\alpha \frac{\partial^2}{\partial x^2}P(x,t),
\end{aligned}
\end{equation}
where $P(x,\rho,t)$ is replaced by $P(x,t)$ since $\rho$ is given as a fixed constant.
In the following, we still use $P(x,t)$ for convenience.

The remainder of this paper is organized as follows. In Section 2, we construct the compact finite difference
schemes for the backward fractional Feynman-Kac equation. In Section 3, by introducing some kind of inner product and
norms, we prove the stability and convergence of the first order time discretization
scheme in the discrete $L^\infty$ norm rigorously. Numerical examples are provided
to verify the effectiveness and accuracy of the proposed compact schemes from first to fourth order time discretization
in Section 4. Finally we draw some conclusions in the last
section.

\section[compact scheme]{Compact finite difference schemes for the backward fractional Feynman-Kac equation}\label{sec:scheme}
This section focuses on deriving compact finite difference schemes for the backward fractional Feynman-Kac equation, which are of
$q$-th $(q=1,2,3,4)$ order approximation in temporal direction and fourth order approximation in spatial direction, respectively.

Without loss of generality, consider the following backward fractional Feynman-Kac equation with non-homogeneous source term in the interval $\Omega=(0,l)$,
\begin{equation} \label{eq:equation}
\begin{aligned}
{}^{s}_{c}D_t^\alpha P(x,t):={}^{s}D_t^\alpha \left[ P(x,t)-e^{-\rho U(x)t}P(x,0)\right ]=K_{\alpha}\frac{\partial^{2}}{\partial x^{2}}P(x,t)
+f(x,t),\\
 0<t\leq T,\ x\in \Omega,
 \end{aligned}
\end{equation}
and the initial and boundary conditions are given as
\begin{align}\label{eq:inital}
&P(x,0)=\varphi (x),\, x \in \Omega,\\  \label{eq:boundary}
&P(0,t)=\psi_{1}(t),\, P(l,t)=\psi_{2}(t),\, 0<t\leq T.
\end{align}

Let $M,N$ be two positive integers, and $h=l/M, \tau=T/N$ be the uniform size of spatial grid and time step,
respectively. Then a spatial and temporal partition can be defined as $x_i=ih$ for $i=0,1,\ldots,M$, and $t_n=n\tau$ for $n=0,1,\ldots,N$.
Denote $\Omega_h=\{x_i \mid 0 \leq i \leq M \}$ and
$\Omega_{\tau}=\{t_n \mid 0 \leq n \leq N \}$.
Take $\mathcal{V}_h=\{u \mid u=(u_0,u_1,\ldots,u_M),u_0=u_M=0 \}$ as
grid function space on $\Omega_h$.
Then for any grid function $u \in \mathcal{V}_h$, we list the following notations
$$\delta_x u_{i-\frac{1}{2}}=\frac{1}{h}(u_i-u_{i-1}),\quad \delta_x^2 u_i=\frac{1}{h}(\delta_x u_{i+\frac{1}{2}}-\delta_x u_{i-\frac{1}{2}}),$$
\begin{equation*}
\mathcal{H}_h u_i=
\left\{
\begin{aligned}
& \frac{1}{12}(u_{i+1}+10u_i+u_{i-1}),&1 \leq i \leq M-1,\\
            & u_i, & i=0 \quad or \quad M.
\end{aligned}\right.
\end{equation*}
It is obvious that $\mathcal{H}_h u_i=(1+\frac{h^2}{12} \delta_x^2)u_i$ for $1 \leq i \leq M-1$.
For any $u, v \in \mathcal V_h$,
we introduce
the inner product and norms as follows
$$\langle u,v \rangle=h \sum_{i=0}^{M-1}(\delta_x u_{i+\frac{1}{2}})(\delta_x \overline{v}_{i+\frac{1}{2}})-
\frac{h^2}{12}h\sum_{i=1}^{M-1}(\delta_x^2 u_i)(\delta_x^2 \overline{v}_i),$$
$$\|u\|_{\infty}=\max_{1 \leq i \leq M-1}|u_i|,\qquad
\|\delta_x u\|=\sqrt{h\sum_{i=1}^{M}|\delta_x u_{i-\frac{1}{2}}|^2},$$
$$\|\delta_x^2 u\|=\sqrt{h\sum_{i=1}^{M-1}|\delta_x^2 u_{i}|^2},\qquad
\|u\|=\sqrt{h\sum_{i=1}^{M-1}|u_i|^2}.$$

\begin{lemma}\label{lem:verin}
For $\forall u \in \mathcal{V}_h$, we have
\begin{equation}\label{eq:esinn}
\frac{2}{3}\|\delta_x u\|^2 \leq \langle u,u \rangle \leq \|\delta_x u\|^2.
\end{equation}
\end{lemma}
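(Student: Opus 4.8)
The plan is to first collapse the quadratic form $\langle u,u\rangle$ into an explicit expression and then reduce both inequalities in \eqref{eq:esinn} to a single inverse-type estimate relating $\|\delta_x u\|$ and $\|\delta_x^2 u\|$. Setting $v=u$ in the definition of the inner product, the two sums become sums of squares, and after reindexing $i+\frac12 \mapsto i-\frac12$ in the first one I would obtain the identity
$$\langle u,u\rangle = h\sum_{i=0}^{M-1}|\delta_x u_{i+\frac12}|^2 - \frac{h^2}{12}\,h\sum_{i=1}^{M-1}|\delta_x^2 u_i|^2 = \|\delta_x u\|^2 - \frac{h^2}{12}\|\delta_x^2 u\|^2.$$
The upper bound $\langle u,u\rangle \le \|\delta_x u\|^2$ is then immediate, since the subtracted term $\frac{h^2}{12}\|\delta_x^2 u\|^2$ is nonnegative.

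For the lower bound I would substitute this identity, which turns $\frac{2}{3}\|\delta_x u\|^2 \le \langle u,u\rangle$ into the equivalent requirement $\frac{h^2}{12}\|\delta_x^2 u\|^2 \le \frac{1}{3}\|\delta_x u\|^2$, that is, the inverse inequality
$$h^2\|\delta_x^2 u\|^2 \le 4\,\|\delta_x u\|^2.$$
To establish this, I would introduce the abbreviation $v_i := \delta_x u_{i-\frac12}$ for $1\le i\le M$, so that $\delta_x^2 u_i = \tfrac{1}{h}(\delta_x u_{i+\frac12}-\delta_x u_{i-\frac12}) = \tfrac{1}{h}(v_{i+1}-v_i)$, whence $h^2|\delta_x^2 u_i|^2 = |v_{i+1}-v_i|^2$. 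After dividing out the common factor $h$, the target estimate reduces to the purely algebraic statement
$$\sum_{i=1}^{M-1}|v_{i+1}-v_i|^2 \le 4\sum_{i=1}^{M}|v_i|^2.$$

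This last inequality follows from the elementary bound $|a-b|^2 \le 2|a|^2 + 2|b|^2$: applying it termwise and summing gives $\sum_{i=1}^{M-1}|v_{i+1}-v_i|^2 \le 2\sum_{i=2}^{M}|v_i|^2 + 2\sum_{i=1}^{M-1}|v_i|^2$, and enlarging each partial sum to the full index range $1,\dots,M$ yields the factor $4$. Tracing the chain of equivalences back recovers the lower bound and completes the proof. I do not expect a genuine obstacle here; the only points that demand care are the bookkeeping of the summation ranges in the reindexing and the telescoping step, and the remark that the constraints $u_0=u_M=0$ built into $\mathcal{V}_h$ guarantee every difference $\delta_x u_{i\pm\frac12}$ appearing above is well defined, so no boundary contributions are silently discarded.
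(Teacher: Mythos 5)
Your proposal is correct and follows essentially the same route as the paper: the identity $\langle u,u\rangle=\|\delta_x u\|^2-\frac{h^2}{12}\|\delta_x^2 u\|^2$ gives the upper bound, and the lower bound is reduced to the same inverse estimate $h^2\|\delta_x^2 u\|^2\leq 4\|\delta_x u\|^2$, proved by the same elementary inequality $|a-b|^2\leq 2|a|^2+2|b|^2$. Your relabeling $v_i=\delta_x u_{i-\frac{1}{2}}$ is purely cosmetic bookkeeping; the paper writes the identical argument directly in terms of $\delta_x u_{i\pm\frac{1}{2}}$.
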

\begin{proof}
From the definition of inner product $\langle u,v \rangle$, we have
$\langle u,u \rangle = \|\delta_x u\|^2-\frac{h^2}{12}\|\delta_x^2 u\|^2$,
which shows $\langle u,u \rangle \leq \|\delta_x u\|^2$ immediately.

Since
\begin{equation}
h^2\|\delta_x^2u\|^2=h\sum_{i=1}^{M-1}|\delta_x u_{i+\frac{1}{2}}-\delta_x u_{i-\frac{1}{2}}|^2
\leq 2h\sum_{i=1}^{M-1}\left(|\delta_xu_{i+\frac{1}{2}}|^2+|\delta_xu_{i-\frac{1}{2}}|^2\right)
\leq 4\|\delta_xu\|^2,\nonumber
\end{equation}
we conclude that~\eqref{eq:esinn} holds.
\end{proof}

\begin{lemma}\label{lem:verinf}
{\cite{SamarskiiAndreev1976}} For $\forall u \in \mathcal{V}_h, \|u\|_{\infty} \leq \frac{\sqrt{l}}{2}\|\delta_x u\|.$
\end{lemma}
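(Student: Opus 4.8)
The plan is to prove this discrete Sobolev-type inequality by exploiting \emph{both} boundary conditions $u_0 = u_M = 0$, which is exactly what produces the sharp constant $\frac{\sqrt{l}}{2}$ rather than the weaker $\sqrt{l/2}$ that a one-sided estimate alone would yield. Fix an index $k$ with $1 \leq k \leq M-1$. Since $u_0 = 0$, telescoping from the left gives the forward representation $u_k = h\sum_{i=1}^{k} \delta_x u_{i-\frac{1}{2}}$, and since $u_M = 0$, telescoping from the right gives the backward representation $u_k = -h\sum_{i=k+1}^{M} \delta_x u_{i-\frac{1}{2}}$.

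First I would apply the Cauchy--Schwarz inequality to each representation separately. Writing each summand as $1\cdot \delta_x u_{i-\frac{1}{2}}$ and summing the constant $1$ over the $k$ (respectively $M-k$) indices, this yields the two bounds $|u_k|^2 \leq x_k\, h\sum_{i=1}^{k} |\delta_x u_{i-\frac{1}{2}}|^2$ and $|u_k|^2 \leq (l-x_k)\, h\sum_{i=k+1}^{M} |\delta_x u_{i-\frac{1}{2}}|^2$, where I use $x_k = kh$ and $l - x_k = (M-k)h$.

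The crucial step is to combine these two estimates instead of using either one in isolation. Dividing the first inequality by $x_k$, the second by $l-x_k$, and adding them, the two partial sums merge into the full sum $h\sum_{i=1}^{M}|\delta_x u_{i-\frac{1}{2}}|^2 = \|\delta_x u\|^2$, which gives $|u_k|^2\bigl(\tfrac{1}{x_k}+\tfrac{1}{l-x_k}\bigr) \leq \|\delta_x u\|^2$. Solving for $|u_k|^2$ introduces the weight $\frac{x_k(l-x_k)}{l}$, whose maximum over the admissible range is $l/4$ (attained near the midpoint). Hence $|u_k| \leq \frac{\sqrt{l}}{2}\|\delta_x u\|$, and taking the maximum over $1 \leq k \leq M-1$ finishes the argument.

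I expect the only genuine subtlety to be recognizing that attaining the sharp constant forces the use of \emph{both} endpoints: a single forward (or backward) Cauchy--Schwarz estimate overestimates the constant by a factor $\sqrt{2}$, so the reciprocal-addition device in the third step is indispensable. This is precisely the discrete counterpart of the extremal piecewise-linear profile that peaks at the midpoint $x = l/2$, which is where the constant $\frac{\sqrt{l}}{2}$ originates.
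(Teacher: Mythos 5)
Your proof is correct and complete. Note that the paper itself gives no proof of this lemma at all: it simply cites Samarskii and Andreev, so there is no argument in the paper to compare against. What you have written is the standard proof of this discrete embedding inequality: the forward and backward telescoping representations from the two zero endpoints, Cauchy--Schwarz on each, and the reciprocal-addition step that merges the two partial sums into the full norm $\|\delta_x u\|^2$ with the weight $\frac{x_k(l-x_k)}{l}\leq \frac{l}{4}$. Your closing remark is also accurate: using only one endpoint (choosing the nearer one for each $k$) yields the constant $\sqrt{l/2}$, so combining both sides is genuinely needed to reach $\frac{\sqrt{l}}{2}$, and this constant is what the paper's stability and convergence theorems (Theorems 3.1 and 3.2) rely on when passing from $\langle E^n,E^n\rangle$ to $\|E^n\|_\infty$.
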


\begin{lemma}\label{lem:spdis}
{\cite{LiaoSun2010}} Let function $g(x) \in C^6\lbrack a,b \rbrack $ and $\xi (\lambda)=5(1-\lambda)^3-3(1-\lambda)^5.$ Then
$$\mathcal{H}_h g''(x_i)=\delta_x^2 g(x_i)+\frac{h^4}{360}\int_0^1[g^{(6)}(x_i-\lambda h)+g^{(6)}(x_i+\lambda h)]\xi(\lambda) d\lambda,\quad 1\leq i \leq M-1.$$
\end{lemma}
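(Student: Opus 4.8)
The plan is to prove the identity by applying Taylor's theorem with the integral form of the remainder separately to the two operators $\mathcal{H}_h g''(x_i)$ and $\delta_x^2 g(x_i)$, and then subtracting. The guiding observation is that both $\delta_x^2 g(x_i)$ and $\mathcal{H}_h g''(x_i)$ are fourth-order consistent approximations of $g''(x_i)$ sharing the \emph{same} expansion $g''(x_i)+\frac{h^2}{12}g^{(4)}(x_i)$ through order $h^2$; hence when we subtract them the polynomial parts cancel identically and only the sixth-derivative remainders survive. The entire task therefore reduces to computing those two remainders explicitly and checking that their difference factors as $\frac{h^4}{360}\xi(\lambda)$ against $g^{(6)}(x_i\mp\lambda h)$.

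Concretely, I would first expand $g(x_i\pm h)$ about $x_i$ to fifth order with integral remainder, so that $g(x_i+h)+g(x_i-h)-2g(x_i)$ has its odd-order terms cancel by symmetry, leaving $h^2 g''(x_i)+\frac{h^4}{12}g^{(4)}(x_i)+\frac{1}{5!}\int_0^h(h-s)^5[g^{(6)}(x_i+s)+g^{(6)}(x_i-s)]\,ds$. Dividing by $h^2$ and substituting $s=\lambda h$ gives $\delta_x^2 g(x_i)=g''(x_i)+\frac{h^2}{12}g^{(4)}(x_i)+\frac{h^4}{120}\int_0^1(1-\lambda)^5[g^{(6)}(x_i+\lambda h)+g^{(6)}(x_i-\lambda h)]\,d\lambda$. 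Next I would apply the same device to $G:=g''$, expanding $G(x_i\pm h)$ to third order, whence $G(x_i+h)+10G(x_i)+G(x_i-h)=12G(x_i)+h^2 G''(x_i)+\frac{1}{3!}\int_0^h(h-s)^3[G^{(4)}(x_i+s)+G^{(4)}(x_i-s)]\,ds$. Dividing by $12$ and using $G=g''$, $G''=g^{(4)}$, $G^{(4)}=g^{(6)}$ yields $\mathcal{H}_h g''(x_i)=g''(x_i)+\frac{h^2}{12}g^{(4)}(x_i)+\frac{h^4}{72}\int_0^1(1-\lambda)^3[g^{(6)}(x_i+\lambda h)+g^{(6)}(x_i-\lambda h)]\,d\lambda$ after the same change of variable.

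Finally I would subtract: the terms $g''(x_i)$ and $\frac{h^2}{12}g^{(4)}(x_i)$ drop out, and writing $\frac{1}{72}=\frac{5}{360}$ and $\frac{1}{120}=\frac{3}{360}$ lets me pull out the common factor $\frac{h^4}{360}$, giving the kernel $5(1-\lambda)^3-3(1-\lambda)^5=\xi(\lambda)$ exactly, which is the claimed formula. The only delicate point, and the place where an error would most likely creep in, is the bookkeeping in the two integral remainders: keeping the Taylor expansions to precisely the right order (fifth order for $g$, third order for $g''=G$) so that the surviving remainder is governed by $g^{(6)}$, and tracking the factorials $5!=120$ and $3!=6$ together with the $\frac{1}{12}$ weight and the Jacobian factor $h$ from $s=\lambda h$. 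Once those constants are pinned down the coefficients $\frac{1}{120}$ and $\frac{1}{72}$ emerge, and the factorization through $\frac{h^4}{360}$ is immediate; a useful consistency check is that $\int_0^1\xi(\lambda)\,d\lambda=\tfrac34$, which reproduces the known leading truncation coefficient $\frac{h^4}{240}g^{(6)}(x_i)$ of the compact scheme.
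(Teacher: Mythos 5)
Your proof is correct: the two Taylor expansions with integral remainder (fifth order for $g$, third order for $G=g''$), the cancellation of the shared $g''(x_i)+\frac{h^2}{12}g^{(4)}(x_i)$ part, and the constants $\frac{1}{120}$, $\frac{1}{72}$ combining as $\frac{5}{360}(1-\lambda)^3-\frac{3}{360}(1-\lambda)^5=\frac{1}{360}\xi(\lambda)$ all check out, as does your consistency check $\int_0^1\xi(\lambda)\,d\lambda=\frac34$ giving the leading coefficient $\frac{h^4}{240}g^{(6)}(x_i)$. The paper itself gives no proof—it cites the lemma from Liao and Sun—and your argument is the standard derivation used in that reference, so there is nothing further to reconcile.
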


According to \cite{ChenDeng2013}, fractional substantial derivatives appeared in~\eqref{eq:equation} have $q$-th order approximations, i.e.,
\begin{equation}\label{eq:tdis}
\begin{aligned}
&{}^{s}D_t^\alpha P(x,t)|_{(x_i,t_n)}=\frac{1}{\tau^{\alpha}}\sum_{k=0}^n d_{i,k}^{q,\alpha}P(x_i,t_{n-k})+\mathcal{O}(\tau^q),\\
&{}^{s}D_t^\alpha [e^{-\rho U(x)t}P(x,0)]_{(x_i,t_n)}=\frac{1}{\tau^\alpha}\sum_{k=0}^{n}d_{i,k}^{q,\alpha}e^{-\rho U_i (n-k)\tau}P(x_i,0)+\mathcal{O}(\tau^q),\\
\end{aligned}
\end{equation}
where
$$U_i=U(x_i)$$
and
\begin{equation}\label{eq:coeff}
d_{i,k}^{q,\alpha}=e^{-\rho U_ik\tau}l_k^{q,\alpha}, q=1,2,3,4.
\end{equation}
$l_k^{1,\alpha},l_k^{2,\alpha},l_k^{3,\alpha}$ and $l_k^{4,\alpha}$ are defined by (2.2), (2.4), (2.6) and (2.8) in \cite{ChenDeng2013a},
respectively. We denote $\left(R_t^\alpha\right)_i^n=\mathcal{O}(\tau^q)$.

Consider equation~\eqref{eq:equation} at the point $(x_i,t_n)$, and we write it as following
$${}_c^{s}D_t^\alpha P(x_i,t_n)=K_\alpha \frac{\partial^2 P(x_i,t_n)}{\partial x^2}+f(x_i,t_n).$$
Acting the compact operator $\mathcal{H}_h$ on both sides of the equation above, we have
\begin{equation}
\mathcal{H}_h \big( {}_c^{s}D_t^\alpha P(x_i,t_n)\big)=K_\alpha\mathcal{H}_h\frac{\partial^2 P(x_i,t_n)}{\partial x^2}
+\mathcal{H}_h f(x_i,t_n).
\end{equation}
Assuming $u(x,t)\in C_{6,2}^{x,t}([0,l]\times [0,T])$, by use of Lemma~\ref{lem:spdis} and~\eqref{eq:tdis} we obtain
\begin{equation}\label{eq:dis}
\begin{aligned}
&\mathcal{H}_h\bigg(\frac{1}{\tau^\alpha}\sum_{k=0}^n d_{i,k}^{q,\alpha}P(x_i,t_{n-k})-\frac{1}{\tau^\alpha}\sum_{k=0}^{n}d_{i,k}^{q,\alpha}e^{-\rho U_i(n-k)\tau}P(x_i,0)\bigg)\\
=&K_\alpha \delta_x^2 P(x_i,t_n)+\mathcal{H}_hf(x_i,t_n)+R_i^n,
\end{aligned}
\end{equation}
where $$R_i^n=\mathcal{H}_h(R_t^\alpha)_i^n+K_\alpha(R_x)_i^n$$
with
$$\left(R_x\right)_i^n=\frac{h^4}{360}\int_0^1\left[\frac{\partial^6P}{\partial x^6}(x_i-\lambda h,t_n)
+\frac{\partial^6P}{\partial x^6}(x_i+\lambda h,t_n)\right]\xi(\lambda)d\lambda.$$
Then there exists a constant $\widetilde C$ such that
\begin{equation}\label{eq:terror}
|R_i^n|\leq \widetilde C(\tau^q+h^4),\quad q=1,2,3,4.
\end{equation}

Denote by $P_i^n$ the approximated value of $P(x_i,t_n)$, and $f_i^n=f(x_i,t_n)$.
Multiplying~\eqref{eq:dis} by $\tau^\alpha$, and omitting
the small term, we derive the compact finite difference
schemes for solving the
backward Feynman-Kac equation~\eqref{eq:equation} with the initial condition~\eqref{eq:inital}
and boundary condition~\eqref{eq:boundary} as follows:
\begin{align}\label{eq:fulldis}
&\mathcal{H}_h\sum_{k=0}^{n}d_{i,k}^{q,\alpha}P_i^{n-k}-\mathcal{H}_h\sum_{k=0}^nd_{i,k}^{q,\alpha}e^{-\rho U_i(n-k)\tau}P_i^0 \nonumber\\
=&K_\alpha \tau^\alpha\delta_x^2P_i^n+\tau^\alpha\mathcal{H}_hf_i^n,\quad 1 \leq i \leq M-1,\quad 1 \leq n \leq N,\\
&P_i^0=\varphi(x_i),\quad 1 \leq i \leq M-1, \\  \label{eq:disbound}
&P_0^n=\psi_1(t_n),\quad P_M^n=\psi_2(t_n),\quad 1 \leq n \leq N,\\
&q=1,2,3,4.\nonumber
\end{align}

To execute the procedure, we rewrite equation~\eqref{eq:fulldis} as the following equivalent form
\begin{equation}\label{eq:execu}
\begin{aligned}
&\mathcal{H}_hd_{i,0}^{q,\alpha}P_i^n-\frac{K_\alpha\tau^\alpha}{h^2}\left(P_{i+1}^n-2P_i^n+P_{i-1}^n\right)\\
=&\sum_{k=0}^{n-1}\mathcal{H}_hd_{i,k}^{q,\alpha}e^{-\rho U_i(n-k)\tau}P_i^0
-\sum_{k=1}^{n-1}\mathcal{H}_hd_{i,k}^{q,\alpha}P_i^{n-k}+\tau^\alpha\mathcal{H}_hf_i^0,\\
&q=1,2,3,4
\end{aligned}
\end{equation}
with $i=1,2,\ldots,M-1$. It is necessary to point out that when $n=1$, the second term on the
right hand side of~\eqref{eq:execu} vanishes automatically.


\section{Stability and convergence analysis of the difference scheme}\label{sec:analysis}
In this section, we restrict $U(x)=1$ and do the detailed theoretical analysis
for the first order discretization in temporal direction of schemes~\eqref{eq:fulldis}$-$\eqref{eq:disbound}.

In the following, we introduce some lemmas first, and then prove the scheme is unconditionally stable and convergent in
discrete $L^\infty$ norm.
For the simplification,
we denote $d_{i,k}^{1,\alpha}$ as $d_{i,k}$ and $l_k^{1,\alpha}$ as
$l_k$, respectively.

\begin{lemma}\label{lem:coeff}
{\cite{DengChenBarkai2015}} The coefficients $l_k$ defined by (2.2) in \cite{ChenDeng2013a} satisfy
\begin{equation}
l_0=1;\quad l_k<0,(k\geq 1); \quad \sum_{k=0}^{n-1}l_k>0;\quad \sum_{k=0}^\infty l_k=0;
\end{equation}
and
\begin{equation}\label{eq:estic}
\frac{1}{n^\alpha \Gamma(1-\alpha)}<\sum_{k=0}^{n-1} l_k=-\sum_{k=n}^\infty l_k \leq \frac{1}{n^\alpha}, \quad n \geq 1.
\end{equation}
\end{lemma}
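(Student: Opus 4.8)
The plan is to exploit the fact that, once the exponential factor has been split off in~\eqref{eq:coeff}, the weights $l_k$ are nothing but the classical Gr\"unwald--Letnikov coefficients of order $\alpha$, generated by $(1-z)^\alpha=\sum_{k=0}^\infty l_k z^k$, i.e. $l_k=(-1)^k\binom{\alpha}{k}$. From this representation $l_0=1$ is immediate. For $k\ge1$ I would read the sign off from $\binom{\alpha}{k}=\alpha(\alpha-1)\cdots(\alpha-k+1)/k!$: since $0<\alpha<1$, exactly $k-1$ of the factors are negative, so $\binom{\alpha}{k}$ has sign $(-1)^{k-1}$ and hence $l_k=-|\binom{\alpha}{k}|<0$. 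Evaluating the generating series at $z=1$ gives $\sum_{k=0}^\infty l_k=(1-1)^\alpha=0$, a step made rigorous by Abel's theorem together with the tail bound $l_k=O(k^{-1-\alpha})$ that guarantees convergence at $z=1$. Combining $\sum_{k=0}^\infty l_k=0$ with $l_k<0$ for $k\ge1$ gives simultaneously the identity $\sum_{k=0}^{n-1}l_k=-\sum_{k=n}^\infty l_k$ and the positivity $\sum_{k=0}^{n-1}l_k>0$, since the tail is a sum of strictly negative terms.

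For the sharp estimate~\eqref{eq:estic} I would first obtain a closed form for the partial sum. The telescoping identity $\sum_{k=0}^{m}(-1)^k\binom{\alpha}{k}=(-1)^m\binom{\alpha-1}{m}$, verified by induction on $m$ using Pascal's rule, applied with $m=n-1$ and rewritten through Gamma functions, yields
\[
\sum_{k=0}^{n-1}l_k=\prod_{j=1}^{n-1}\frac{j-\alpha}{j}=\frac{\Gamma(n-\alpha)}{\Gamma(1-\alpha)\,\Gamma(n)}.
\]
Inserting this into~\eqref{eq:estic} and cancelling the positive factor $\Gamma(1-\alpha)$, the entire chain reduces to the single two-sided inequality $1<b_n\le\Gamma(1-\alpha)$ for the auxiliary sequence $b_n:=n^\alpha\,\Gamma(n-\alpha)/\Gamma(n)$.

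The heart of the proof is then to control $b_n$ by monotonicity. I would compute
\[
\frac{b_{n+1}}{b_n}=\Bigl(1+\tfrac1n\Bigr)^{\alpha}\Bigl(1-\tfrac{\alpha}{n}\Bigr)
\]
using $\Gamma(n+1-\alpha)/\Gamma(n-\alpha)=n-\alpha$, and bound it by the Bernoulli-type inequality $(1+\tfrac1n)^\alpha\le1+\tfrac{\alpha}{n}$ (valid for $0<\alpha<1$), giving $b_{n+1}/b_n\le1-\alpha^2/n^2<1$. Thus $b_n$ is strictly decreasing, and since $b_1=\Gamma(1-\alpha)$ the upper bound $b_n\le\Gamma(1-\alpha)$ follows, with equality exactly at $n=1$, which is why~\eqref{eq:estic} carries ``$\le$'' on the right. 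For the strict lower bound I would note, via Stirling's formula, that $\Gamma(n-\alpha)/\Gamma(n)\sim n^{-\alpha}$, so $b_n\to1$; a strictly decreasing sequence with limit $1$ satisfies $b_n>1$ for every finite $n$ (alternatively one may invoke Gautschi's inequality $\Gamma(n)/\Gamma(n-\alpha)<n^\alpha$ directly for $n\ge2$ and check $n=1$ by hand). I expect the sharp two-sided step to be the only real obstacle: the crude order estimate $l_k=O(k^{-1-\alpha})$ fixes the rate $n^{-\alpha}$ but not the constants, and pinning down $1$ and $\Gamma(1-\alpha)$ requires exactly the refined Bernoulli/Gautschi-type inequalities above rather than leading-order asymptotics alone.
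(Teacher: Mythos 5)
Your proposal is correct, but there is nothing in the paper to compare it against: the authors do not prove this lemma at all --- it is quoted from \cite{DengChenBarkai2015} (hence the citation at the head of the statement). So your argument stands as a self-contained proof of a result the paper simply imports, and as such it holds up. The identification $l_k=(-1)^k\binom{\alpha}{k}$ is exactly the $q=1$ (Gr\"unwald--Letnikov) case of (2.2) in \cite{ChenDeng2013a}; the sign count giving $l_k<0$ for $k\ge 1$, the evaluation $\sum_{k=0}^{\infty}l_k=(1-1)^{\alpha}=0$, and the resulting identity $\sum_{k=0}^{n-1}l_k=-\sum_{k=n}^{\infty}l_k>0$ are all sound. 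The decisive step --- passing through the closed form
\begin{equation*}
\sum_{k=0}^{n-1}l_k=(-1)^{n-1}\binom{\alpha-1}{n-1}=\frac{\Gamma(n-\alpha)}{\Gamma(1-\alpha)\,\Gamma(n)}
\end{equation*}
to reduce~\eqref{eq:estic} to $1<b_n\le\Gamma(1-\alpha)$ with $b_n=n^{\alpha}\Gamma(n-\alpha)/\Gamma(n)$, and then settling this via $b_{n+1}/b_n=(1+\tfrac1n)^{\alpha}(1-\tfrac{\alpha}{n})\le 1-\tfrac{\alpha^2}{n^2}<1$, $b_1=\Gamma(1-\alpha)$, and $b_n\to 1$ --- is complete and reproduces exactly the strictness pattern of the statement (equality on the right only at $n=1$, strict inequality on the left for every $n$). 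Two minor streamlinings are possible. First, the appeal to Abel's theorem with the a priori tail bound $l_k=O(k^{-1-\alpha})$ is avoidable: once the closed form of the partial sums is established (which uses only Pascal's rule), the partial sums are positive and strictly decreasing, and $\Gamma(n-\alpha)/\Gamma(n)\sim n^{-\alpha}\to 0$ gives $\sum_{k=0}^{\infty}l_k=0$ with no Tauberian input, removing any appearance of circularity in the order of your steps. Second, your Gautschi alternative for the lower bound applies only for $n\ge 2$ (it needs $x=n-1>0$), with $n=1$ handled separately by $\Gamma(1-\alpha)>1$; your monotonicity argument covers all $n\ge 1$ uniformly, so it is the cleaner of the two routes you sketch.
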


\begin{theorem}\label{thm:stability}
The difference scheme~\eqref{eq:fulldis}$-$\eqref{eq:disbound}
is unconditionally stable
with the assumption
$\mathfrak{R}(\rho)>0$.
\end{theorem}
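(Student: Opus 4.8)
The plan is to run a discrete energy argument built around the new inner product $\langle\cdot,\cdot\rangle$, exploiting the sign structure of the weights $l_k$ and the damping $|e^{-\rho k\tau}|\le 1$ furnished by the hypothesis $\mathfrak{R}(\rho)>0$. Since Section~\ref{sec:analysis} fixes $U(x)=1$ and $q=1$, we have $d_{i,k}=e^{-\rho k\tau}l_k$ independent of $i$, and writing $S_n=\sum_{k=0}^n l_k$ the scheme~\eqref{eq:fulldis} collapses to $\mathcal{H}_h\sum_{k=0}^n e^{-\rho k\tau}l_k P_i^{n-k}-e^{-\rho n\tau}S_n\mathcal{H}_h P_i^0=K_\alpha\tau^\alpha\delta_x^2 P_i^n+\tau^\alpha\mathcal{H}_h f_i^n$. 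Because $\langle\cdot,\cdot\rangle$ is tailored to the operators, I would first record the algebraic identity $\langle u,v\rangle=-(\mathcal{H}_h u,\delta_x^2 v)=-(\delta_x^2 u,\mathcal{H}_h v)$, where $(\cdot,\cdot)$ is the standard discrete $L^2$ product on $\mathcal{V}_h$; this follows from $\mathcal{H}_h=1+\frac{h^2}{12}\delta_x^2$ and summation by parts, and it is the pivot that turns the compact factor into the desired energy.

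Next I would pair the scheme with $-\delta_x^2 P^n$ in the $(\cdot,\cdot)$ product. The $k=0$ term yields $(\mathcal{H}_h P^n,-\delta_x^2 P^n)=\langle P^n,P^n\rangle$, the diffusion term yields $K_\alpha\tau^\alpha\|\delta_x^2 P^n\|^2$, and every history and initial term is converted into $\langle P^{n-k},P^n\rangle$ and $\langle P^0,P^n\rangle$. Taking real parts (the left side being real) gives $\langle P^n,P^n\rangle+K_\alpha\tau^\alpha\|\delta_x^2 P^n\|^2=\mathfrak{R}[-\sum_{k=1}^n e^{-\rho k\tau}l_k\langle P^{n-k},P^n\rangle]+\mathfrak{R}[e^{-\rho n\tau}S_n\langle P^0,P^n\rangle]-\tau^\alpha\mathfrak{R}[(\mathcal{H}_h f^n,\delta_x^2 P^n)]$. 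Writing $E_m=\langle P^m,P^m\rangle^{1/2}$, which is a genuine norm on $\mathcal{V}_h$ by Lemma~\ref{lem:verin}, I would estimate the right side termwise using $-l_k=|l_k|>0$, the decay $|e^{-\rho k\tau}|=e^{-\mathfrak{R}(\rho)k\tau}\le 1$, the Cauchy--Schwarz inequality $|\langle u,v\rangle|\le E_u E_v$, and Young's inequality on the source so that a factor $\tfrac12 K_\alpha\tau^\alpha\|\delta_x^2 P^n\|^2$ is absorbed into the left side.

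These bounds reduce everything to the scalar recursion $E_n^2\le\sum_{k=1}^n|l_k|E_{n-k}^2+S_nE_0^2+G_n$ with $G_n=\frac{\tau^\alpha}{K_\alpha}\|\mathcal{H}_h f^n\|^2$, where the decisive fact from Lemma~\ref{lem:coeff} is $\sum_{k=1}^n|l_k|=1-S_n$, so that $\{|l_k|\}_{k\ge1}$ together with $S_n$ form a convex combination. A straightforward induction then yields $E_n^2\le E_0^2+B$ with $B=\max_{1\le m\le N}G_m/S_m$: assuming the bound for indices $<n$, convexity collapses the first two sums to $E_0^2+(1-S_n)B+G_n$, and $B\ge G_n/S_n$ closes the step. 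The estimate $S_n>\big[(n+1)^\alpha\Gamma(1-\alpha)\big]^{-1}$ from~\eqref{eq:estic} gives the uniform bound $B\le\frac{\Gamma(1-\alpha)(T+\tau)^\alpha}{K_\alpha}\max_m\|\mathcal{H}_h f^m\|^2$, independent of $\tau$ and $h$. Combining with Lemmas~\ref{lem:verin} and~\ref{lem:verinf}, $\|P^n\|_\infty^2\le\frac{l}{4}\|\delta_x P^n\|^2\le\frac{3l}{8}E_n^2\le\frac{3l}{8}(E_0^2+B)$, and since $E_0^2\le\|\delta_x\varphi\|^2$ the discrete $L^\infty$ norm of $P^n$ is controlled by the initial data and the source with no constraint linking $\tau$ and $h$, i.e. unconditional stability.

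The main obstacle I anticipate is not any single estimate but the opening identity $\langle u,v\rangle=-(\mathcal{H}_h u,\delta_x^2 v)$ together with the correct choice of test function $-\delta_x^2 P^n$: this is exactly what makes the compact factor $\mathcal{H}_h$ cooperate with $\delta_x^2$ to reproduce the energy $\langle P^n,P^n\rangle$ while simultaneously exposing the convex-weight structure $\sum_{k\ge1}|l_k|+S_n=1$ that drives the Gr\"onwall-type induction. A secondary subtlety is the bookkeeping of the complex coefficients: one must take real parts and invoke $\mathfrak{R}(\rho)>0$ precisely at the step $|e^{-\rho k\tau}|\le1$, which is the only place the hypothesis enters and is what prevents the history terms from amplifying the energy.
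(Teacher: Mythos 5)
Your proposal is, at its core, the same argument as the paper's: your identity $\langle u,v\rangle=-(\mathcal{H}_h u,\delta_x^2 v)$ and the test function $-\delta_x^2(\cdot)$ are exactly the paper's step of multiplying the scheme by $h(-\delta_x^2\overline{\varepsilon}_i^n)$ and summing by parts; the Cauchy--Schwarz bound in the $\langle\cdot,\cdot\rangle$-norm, the use of $|e^{-\rho k\tau}|\le 1$ as the sole entry point of $\mathfrak{R}(\rho)>0$, the weight structure $\sum_{k\ge 1}|l_k|=l_0-\sum_{k=0}^{n}l_k$ from Lemma~\ref{lem:coeff}, the induction on the time level, and the final passage to the maximum norm through Lemma~\ref{lem:verin} and Lemma~\ref{lem:verinf} all coincide with the paper's proof of Theorem~\ref{thm:stability}. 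The one genuine difference is that you carry a source term through the recursion, absorbing $\tfrac12 K_\alpha\tau^\alpha\|\delta_x^2 P^n\|^2$ by Young's inequality and invoking the lower bound~\eqref{eq:estic} on $\sum_k l_k$; the paper's stability proof has no source (it cancels in the perturbation equations), and that machinery appears instead in its convergence proof (Theorem~\ref{thm:abc}). In effect you merged the two, which buys a stronger stability statement (robustness to perturbations of the right-hand side as well as of the initial data) at no extra conceptual cost.

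One step fails as literally written, however. You run the energy argument on $P^n$ itself, but $P^n$ belongs to $\mathcal{V}_h$ only when the boundary data vanish: under the general conditions~\eqref{eq:boundary} one has $P_0^n=\psi_1(t_n)$, $P_M^n=\psi_2(t_n)$, not necessarily zero, so the summation by parts behind $\langle u,v\rangle=-(\mathcal{H}_h u,\delta_x^2 v)$ produces boundary terms you have not accounted for, and Lemma~\ref{lem:verinf} is likewise unavailable for $P^n$. The paper sidesteps this by working with the perturbation $\varepsilon^n=\widetilde P^n-P^n$ of two solutions sharing the same boundary data, which does lie in $\mathcal{V}_h$ by~\eqref{eq:boundpet}. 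Your argument is repaired verbatim by the same substitution --- replace $P^n$ by $\varepsilon^n$ and $f^n$ by the perturbation of the source --- after which your recursion, the convexity-based induction, and the uniform bound $B\le\Gamma(1-\alpha)(T+\tau)^\alpha K_\alpha^{-1}\max_m\|\mathcal{H}_h f^m\|^2$ (now for the perturbed data) go through unchanged and indeed yield the theorem.
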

\begin{proof}
Assume $\widetilde P_i^n$ is the approximate solution of $P_i^n$, which is the exact solution of the scheme~\eqref{eq:fulldis}$-$\eqref{eq:disbound}.
Let
$\varepsilon_i^n=\widetilde P_i^n-P_i^n,\ 0 \leq i\leq M,\ \ 0 \leq n\leq N$. From~\eqref{eq:fulldis}$-$\eqref{eq:disbound},
we have the perturbation error equations
\begin{align}\label{eq:stab}
&\mathcal H_h\left(d_{i,0}\varepsilon_i^n+\sum_{k=1}^{n-1}d_{i,k}\varepsilon_i^{n-k}-\sum_{k=0}^{n-1}d_{i,k}e^{-\rho (n-k)\tau}\varepsilon_i^0\right)
=K_\alpha \tau^\alpha \delta_x^2\varepsilon_i^n,\nonumber\\
&1\leq i\leq M-1,\qquad 1\leq n\leq N,\\  \label{eq:boundpet}
&\varepsilon_0^n=\varepsilon_M^n=0,\qquad 1\leq n\leq N.
\end{align}
By use of~\eqref{eq:coeff}, equation~\eqref{eq:stab} can also be written as
\begin{equation}\label{eq:stabl}
\mathcal H_h\left(l_0\varepsilon_i^n+\sum_{k=1}^{n-1}e^{-\rho k\tau}l_k\varepsilon_i^{n-k}-\sum_{k=0}^{n-1}e^{-\rho n\tau}l_k\varepsilon_i^0\right)
=K_\alpha \tau^\alpha \delta_x^2\varepsilon_i^n.
\end{equation}
Multiplying~\eqref{eq:stabl} by $h(-\delta_x^2\overline{\varepsilon}_i^n)$ and summing up for $i$ from 1 to $M-1$, we get
\begin{equation}
\begin{aligned}
&h\sum_{i=1}^{M-1}\left(-\delta_x^2\overline{\varepsilon}_i^n\right)\left[\left(1+\frac{h^2}{12}\delta_x^2\right)\left(l_0\varepsilon_i^n+
\sum_{k=1}^{n-1}e^{-\rho k\tau}l_k\varepsilon_i^{n-k}-\sum_{k=0}^{n-1}e^{-\rho n\tau}l_k\varepsilon_i^0\right)\right]\\
=&h\sum_{i=1}^{M-1}\left(-K_\alpha\tau^\alpha\right)|\delta_x^2\varepsilon_i^n|^2.
\end{aligned}
\end{equation}
Using the summation formula by parts and noticing~\eqref{eq:boundpet}, we obtain
\begin{equation}\nonumber
\begin{aligned}
&l_0h\sum_{i=0}^{M-1}\left|\delta_x\varepsilon_{i+\frac{1}{2}}^n\right|^2-l_0\frac{h^2}{12}h\sum_{i=1}^{M-1}\left|\delta_x^2\varepsilon_i^n\right|^2\\
=&-h\sum_{i=0}^{M-1}\sum_{k=1}^{n-1}l_k\left(\delta_x\overline{\varepsilon}_{i+\frac{1}{2}}^n\right)\left(\delta_x\left(e^{-\rho k\tau}\varepsilon_{i+\frac{1}{2}}^{n-k}\right)\right)\\
&+\frac{h^2}{12}h\sum_{i=1}^{M-1}\sum_{k=1}^{n-1}l_k
\left(\delta_x^2\overline{\varepsilon}_i^n\right)\left(\delta_x^2\left(e^{-\rho k\tau}\varepsilon_i^{n-k}\right)\right)\\
&+h\sum_{i=0}^{M-1}\sum_{k=0}^{n-1}l_k\left(\delta_x\overline{\varepsilon}_{i+\frac{1}{2}}^n\right)\left(\delta_x\left(e^{-\rho n\tau}\varepsilon_{i+\frac{1}{2}}^0\right)\right)\\
&-\frac{h^2}{12}h\sum_{i=1}^{M-1}\sum_{k=0}^{n-1}l_k\left(\delta_x^2\overline{\varepsilon}_i^n\right)
\left(\delta_x^2\left(e^{-\rho n\tau}\varepsilon_i^0\right)\right)
-K_\alpha\tau^\alpha\parallel\delta_x^2\varepsilon^n\parallel^2.
\end{aligned}
\end{equation}
Then it can be deduced immediately that the inequality below holds.
\begin{equation}\nonumber
l_0\langle\varepsilon^n,\varepsilon^n\rangle\leq -\sum_{k=1}^{n-1}l_k\left|\langle e^{-\rho k\tau}\varepsilon^{n-k},\varepsilon^n\rangle\right|
+\sum_{k=0}^{n-1}l_k\left|\langle e^{-\rho n\tau}\varepsilon^0 ,\varepsilon^n\rangle\right|.
\end{equation}

Let
$$A_1=-\sum_{k=1}^{n-1}l_k\left|\langle e^{-\rho k\tau}\varepsilon^{n-k},\varepsilon^n\rangle\right|$$
and $$A_2=\sum_{k=0}^{n-1}l_k\left|\langle e^{-\rho n\tau}\varepsilon^0 ,\varepsilon^n\rangle\right|.$$
From the Cauchy-Schwarz inequality and Lemma~\ref{lem:coeff}, we have the estimates
\begin{equation}\nonumber
A_1\leq \frac{1}{2}\sum_{k=1}^{n-1}(-l_k)\left(\langle \varepsilon^n,\varepsilon^n\rangle
+\langle e^{-\rho k\tau}\varepsilon^{n-k},e^{-\rho k\tau}\varepsilon^{n-k}\rangle\right),
\end{equation}
\begin{equation}\nonumber
A_2\leq \frac{1}{2}\sum_{k=0}^{n-1}l_k\left(\langle\varepsilon^n,\varepsilon^n\rangle+\langle e^{-\rho n\tau}\varepsilon^0,e^{-\rho n\tau}\varepsilon^0\rangle\right).
\end{equation}
It follows that
\begin{equation}\label{eq:verpp}
\frac{1}{2}l_0\langle\varepsilon^n,\varepsilon^n\rangle\leq
-\frac{1}{2}\sum_{k=1}^{n-1}l_k\langle\varepsilon^{n-k},\varepsilon^{n-k}\rangle+\frac{1}{2}l_0\langle\varepsilon^0,\varepsilon^0\rangle
+\frac{1}{2}\sum_{k=1}^{n-1}l_k\langle\varepsilon^0,\varepsilon^0\rangle.
\end{equation}

Next, we prove
\begin{equation}\label{eq:verp}
\langle\varepsilon^n,\varepsilon^n\rangle\leq\langle\varepsilon^0,\varepsilon^0\rangle
\end{equation}
by mathematical induction.
In the case $n=1$,~\eqref{eq:verp} holds obviously according to~\eqref{eq:verpp}.
Suppose that for $s=1,2,\dots,n-1$,
\begin{equation}\label{eq:supp}
\langle\varepsilon^s,\varepsilon^s\rangle\leq\langle\varepsilon^0,\varepsilon^0\rangle
\end{equation}
holds.
When $s=n$, according to~\eqref{eq:verpp} and~\eqref{eq:supp}, we have
\begin{equation}\label{eq:var}\nonumber
\frac{1}{2}l_0\langle\varepsilon^n,\varepsilon^n\rangle\leq
-\frac{1}{2}\sum_{k=1}^{n-1}l_k\langle\varepsilon^0,\varepsilon^0\rangle
+\frac{1}{2}l_0\langle\varepsilon^0,\varepsilon^0\rangle
+\frac{1}{2}\sum_{k=1}^{n-1}l_k\langle\varepsilon^0,\varepsilon^0\rangle,
\end{equation}
which indicates that $\langle\varepsilon^n,\varepsilon^n\rangle\leq\langle\varepsilon^0,\varepsilon^0\rangle.$

Combining~\eqref{eq:verp} with Lemma~\ref{lem:verin} and Lemma~\ref{lem:verinf}, we conclude that
\begin{equation}\nonumber
\begin{aligned}
\parallel\varepsilon^n\parallel_\infty^2\leq & \frac{l}{4}\|\delta_x \varepsilon^n\|^2
\leq  \frac{l}{4}\cdot\frac{3}{2}\langle\varepsilon^n,\varepsilon^n\rangle
\leq  \frac{3l}{8}\langle\varepsilon^0,\varepsilon^0\rangle
\leq  \frac{3l}{8}\|\delta_x\varepsilon^0\|^2,
\end{aligned}
\end{equation}
which completes the proof.
\end{proof}

\begin{theorem}\label{thm:abc}
Let $P_i^n$ be the solution of the difference scheme~\eqref{eq:fulldis}$-$\eqref{eq:disbound}, and $P(x_i,t_n)$ be the solution of the
problem~\eqref{eq:equation}$-$\eqref{eq:boundary} with the assumption $\mathfrak{R}(\rho)>0$.
Denote $E_i^n=P(x_i,t_n)-P_i^n,\ 0\leq i\leq M,\ 0\leq n\leq N$,
then there exists a positive constant $C$ such that
\begin{equation}
\parallel E^n \parallel_\infty\leq C\left(\tau+h^4\right),\qquad 0\leq n\leq N.
\end{equation}
\end{theorem}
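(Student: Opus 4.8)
The plan is to mirror the structure of the stability proof (Theorem~\ref{thm:stability}), but now applied to the error function $E_i^n$ rather than to a perturbation $\varepsilon_i^n$. The key difference is that $E_i^n$ satisfies an \emph{inhomogeneous} version of the discrete equations, where the inhomogeneity is exactly the truncation term $R_i^n$ bounded in~\eqref{eq:terror}. First I would subtract the exact scheme~\eqref{eq:dis} (multiplied by $\tau^\alpha$) from the numerical scheme~\eqref{eq:fulldis}. Since $E_i^0=0$ by the initial condition~\eqref{eq:inital} and $E_0^n=E_M^n=0$ by the boundary condition~\eqref{eq:boundary}, the resulting error equation reads
\begin{equation}\nonumber
\mathcal H_h\left(l_0 E_i^n+\sum_{k=1}^{n-1}e^{-\rho k\tau}l_k E_i^{n-k}\right)
=K_\alpha\tau^\alpha\delta_x^2 E_i^n+\tau^\alpha R_i^n,\qquad 1\le i\le M-1,
\end{equation}
which is the analogue of~\eqref{eq:stabl} with the vanishing initial-data sum removed and the extra source term $\tau^\alpha R_i^n$ present.

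Next I would repeat the energy argument verbatim: multiply by $h(-\delta_x^2\overline E_i^n)$, sum over $i$, and apply summation by parts together with the boundary conditions to convert the left-hand side into $l_0\langle E^n,E^n\rangle$ plus the cross terms involving $\langle e^{-\rho k\tau}E^{n-k},E^n\rangle$. The new ingredient is the source term, which after summation contributes $\tau^\alpha\,h\sum_{i=1}^{M-1}(-\delta_x^2\overline E_i^n)R_i^n$. I would bound this using the Cauchy--Schwarz inequality in the discrete $\|\cdot\|$ norm to obtain something of the form $\tau^\alpha\|\delta_x^2 E^n\|\,\|R^n\|$, then absorb the $\|\delta_x^2 E^n\|$ factor using the $K_\alpha\tau^\alpha\|\delta_x^2 E^n\|^2$ term that appears on the right (completing a square, as in the treatment of $A_1$ and $A_2$). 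This yields an inequality of the same shape as~\eqref{eq:verpp} but with an additive contribution controlled by $\|R^n\|^2$, which by~\eqref{eq:terror} is bounded by $C l(\tau^q+h^4)^2$ with $q=1$.

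Then I would set up the induction analogous to~\eqref{eq:verp}, proving $\langle E^n,E^n\rangle\le C_n(\tau+h^4)^2$ for a constant $C_n$ that must be shown to stay bounded uniformly in $n$. The crucial estimate here is~\eqref{eq:estic} from Lemma~\ref{lem:coeff}, namely $\sum_{k=0}^{n-1}l_k>\frac{1}{n^\alpha\Gamma(1-\alpha)}$; this lower bound on the positive partial sum $\sum_{k=0}^{n-1}l_k$ is what converts the accumulated truncation error (which carries a factor like $n\tau^\alpha$ after summing $n$ time levels) into a uniform bound, since $n\tau^\alpha\cdot n^\alpha\le T^\alpha n$ balances against the coefficient structure. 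Finally, having established $\langle E^n,E^n\rangle\le C^2(\tau+h^4)^2$ uniformly in $n$, I would conclude exactly as at the end of Theorem~\ref{thm:stability}: by Lemma~\ref{lem:verin} and Lemma~\ref{lem:verinf},
\begin{equation}\nonumber
\|E^n\|_\infty^2\le\frac{l}{4}\|\delta_x E^n\|^2\le\frac{3l}{8}\langle E^n,E^n\rangle\le\frac{3l}{8}C^2(\tau+h^4)^2,
\end{equation}
which gives the claimed bound $\|E^n\|_\infty\le C(\tau+h^4)$.

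I expect the main obstacle to be the correct handling of the accumulated truncation error in the induction, specifically ensuring the constant $C$ does not blow up as $n\to N$. Unlike the pure stability estimate, where the source term is absent and the induction closes trivially, here each time step injects an error of size $\tau^\alpha\|R^k\|$, and one must use the delicate coefficient bound~\eqref{eq:estic} to show these contributions sum to a quantity bounded independently of $N$. Getting the powers of $\tau$, $n$, and $n^\alpha$ to cancel correctly (so that $T^\alpha=(N\tau)^\alpha$ appears as a fixed constant rather than a growing factor) is the technical heart of the argument, and it is precisely where the sharp two-sided estimate on $\sum_{k=0}^{n-1}l_k$ is indispensable.
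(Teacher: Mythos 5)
Your proposal follows essentially the same route as the paper's own proof: the same error equation with source term $\tau^\alpha R_i^n$ (and vanishing initial-data sum since $E_i^0=0$), the same energy argument in the inner product $\langle\cdot,\cdot\rangle$ with the residual absorbed into $K_\alpha\tau^\alpha\|\delta_x^2E^n\|^2$ by Young's inequality, and an induction closed by the bound~\eqref{eq:estic}, giving the uniform factor $\tau^\alpha n^\alpha\Gamma(1-\alpha)\le T^\alpha\Gamma(1-\alpha)$. The paper makes your ``uniformly bounded $C_n$'' explicit by taking the induction hypothesis $\langle E^n,E^n\rangle\le\left(\sum_{k=0}^{n-1}l_k\right)^{-1}C_1\tau^\alpha(\tau+h^4)^2$; note that the truncation error does not actually accumulate a factor $n\tau^\alpha$ as your intermediate heuristic suggests---the weight structure $-\sum_{k=1}^{n-1}l_k<l_0=1$ prevents linear growth, and only the $n^\alpha$ growth of $\left(\sum_{k=0}^{n-1}l_k\right)^{-1}$ must be offset by $\tau^\alpha$.
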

\begin{proof}
According to~\eqref{eq:dis} and~\eqref{eq:fulldis}$-$\eqref{eq:disbound}, we get the error equations
\begin{align}
&\mathcal H_h\left(d_{i,0}E_i^n+\sum_{k=1}^{n-1}d_{i,k}E_i^{n-k}\right)
=K_\alpha\tau^\alpha\delta_x^2E_i^n+\tau^\alpha R_i^n, \nonumber\\
&1\leq i\leq M-1, \qquad 1\leq k\leq N,\label{eq:error}\\
&E_0^n=0,\quad E_M^n=0, \quad 1\leq n \leq N,\\
&E_i^0=0,\quad 1\leq i\leq M-1
\end{align}
with $\mid R_i^n\mid\leq \widetilde C(\tau+h^4)$ given in~\eqref{eq:terror}.
Multiplying~\eqref{eq:error} by $h(-\delta_x^2\overline
{E}_i^n)$ and summing up for $i$ from 1 to $M-1$, we have
\begin{equation}\label{eq:multi}
\begin{aligned}
&h\sum_{i=1}^{M-1}(-\delta_x^2\overline{E}_i^n)\left[\left(1+\frac{h^2}{12}\delta_x^2\right)\left(d_{i,0}E_i^n+\sum_{k=1}^{n-1}d_{i,k}E_i^{n-k}\right)\right]\\
=&hK_\alpha\tau^\alpha\sum_{i=1}^{M-1}\left(-\delta_x^2\overline{E}_i^n\right)\left(\delta_x^2E_i^n\right)
+h\tau^\alpha\sum_{i=1}^{M-1}\left(-\delta_x^2\overline{E}_i^n\right)R_i^n.
\end{aligned}
\end{equation}
From~\eqref{eq:coeff},~\eqref{eq:multi} can also be written as
\begin{equation}\label{eq:formu}
\begin{aligned}
&h\sum_{i=1}^{M-1}\left(-\delta_x^2\overline{E}_i^n\right)\left(l_0E_i^n+\sum_{k=1}^{n-1}e^{-\rho k\tau}l_kE_i^{n-k}\right)\\
&+\frac{h^2}{12}h\sum_{i=1}^{M-1}\left(-\delta_x^2\overline{E}_i^n\right)\left(l_0\delta_x^2E_i^n+
\sum_{k=1}^{n-1}l_k\delta_x^2\left(e^{-\rho k\tau}E_i^{n-k}\right)\right)\\
=&-K_\alpha\tau^\alpha h\sum_{i=1}^{M-1}\left|\delta_x^2E_i^n\right|^2-\tau^\alpha h\sum_{i=1}^{M-1}\left(\delta_x^2\overline{E}_i^n\right)R_i^n.
\end{aligned}
\end{equation}
Using the summation formula by parts, we obtain from~\eqref{eq:formu}
\begin{equation}\nonumber
\begin{aligned}
&l_0h\sum_{i=0}^{M-1}\left|\delta_xE_{i+\frac{1}{2}}^n\right|^2+h\sum_{k=1}^{n-1}l_k\sum_{i=0}^{M-1}\left(\delta_x\overline{E}_{i+\frac{1}{2}}^n\right)
\left(\delta_xe^{-\rho k\tau}E_{i+\frac{1}{2}}^{n-k}\right)\\
&-l_0\frac{h^2}{12}h\sum_{i=1}^{M-1}\left|\delta_x^2E_i^n\right|^2-\frac{h^2}{12}h\sum_{k=1}^{n-1}l_k\sum_{i=1}^{M-1}
\left(\delta_x^2\overline{E}_i^n\right)\left(\delta_x^2e^{-\rho k\tau}E_i^{n-k}\right)\\
=&-K_\alpha\tau^\alpha h\sum_{i=1}^{M-1}\left|\delta_x^2E_i^n\right|^2-\tau^\alpha h\sum_{i=1}^{M-1}\left(\delta_x^2\overline{E}_i^n\right)R_i^n,
\end{aligned}
\end{equation}
which implies
\begin{equation}\nonumber
l_0\langle E^n,E^n\rangle+\sum_{k=1}^{n-1}l_k\langle e^{-\rho k\tau}E^{n-k}, E^n\rangle
=-K_\alpha\tau^\alpha\parallel\delta_x^2E^n\parallel^2-\tau^\alpha h\sum_{i=1}^{M-1}\left(\delta_x^2\overline{E}_i^n\right)R_i^n.
\end{equation}
Then it can be deduced that
\begin{equation}\label{eq:esti}
\begin{aligned}
l_0\langle E^n,E^n\rangle=&\left|-\sum_{k=1}^{n-1}l_k\langle e^{-\rho k\tau}E^{n-k},E^n\rangle
-\tau^\alpha h\sum_{i=1}^{M-1}\left(\delta_x^2\overline{E}_i^n\right)R_i^n\right|
-K_\alpha\tau^\alpha\parallel\delta_x^2E^n\parallel^2\\
\leq &\left|-\sum_{k=1}^{n-1}l_k\langle e^{-\rho k\tau}E^{n-k},E^n\rangle\right|
+\tau^\alpha h\left|\sum_{i=1}^{M-1}\left(\delta_x^2\overline{E}_i^n\right)R_i^n\right|
-K_\alpha\tau^\alpha\parallel\delta_x^2E^n\parallel^2\\
\leq &\sum_{k=1}^{n-1}(-l_k)\left|\langle e^{-\rho k\tau}E^{n-k},E^n\rangle\right|
+\tau^\alpha h\left|\sum_{i=1}^{M-1}\left(\delta_x^2\overline{E}_i^n\right)R_i^n\right|
-K_\alpha\tau^\alpha\parallel\delta_x^2E^n\parallel^2\\
\leq &-\frac{1}{2}\sum_{k=1}^{n-1}l_k\left(\langle E^n,E^n\rangle+\langle e^{-\rho k\tau}E^{n-k},e^{-\rho k\tau}E^{n-k}\rangle\right)\\
&-K_\alpha\tau^\alpha\parallel\delta_x^2E^n\parallel^2
+\tau^\alpha h\left|\sum_{i=1}^{M-1}\left(\delta_x^2\overline{E}_i^n\right)R_i^n\right|\\
\end{aligned}
\end{equation}
with the Cauchy-Schwarz inequality being used.

Let $$B=\tau^\alpha h\left|\sum_{i=1}^{M-1}\left(\delta_x^2\overline{E}_i^n\right)R_i^n\right|.$$
It is clear that
\begin{equation}\label{eq:estib}
\begin{aligned}
B\leq &\tau^\alpha h\sum_{i=1}^{M-1}\left|\left(\delta_x^2\overline{E}_i^n\right)R_i^n\right|\\
\leq &\tau^\alpha hK_\alpha\sum_{i=1}^{M-1}\left|\delta_x^2\overline{E}_i^n\right|^2+\tau^\alpha h\frac{1}{4K_\alpha}\sum_{i=1}^{M-1}\left|R_i^n\right|^2\\
=&K_\alpha\tau^\alpha\|\delta_x^2E^n\|^2+\frac{\tau^\alpha}{4K_\alpha}\| R^n\|^2.
\end{aligned}
\end{equation}
Since $\mid R_i^n\mid\leq \widetilde C\left(\tau+h^4\right)$, it can be obtained that
\begin{equation}\label{eq:estir}
\| R^n\|^2\leq l\widetilde C^2(\tau+h^4)^2.
\end{equation}
Substituting~\eqref{eq:estib} into~\eqref{eq:esti}, and noticing~\eqref{eq:estir}, we have
\begin{equation}\label{eq:forr}
l_0\langle E^n,E^n\rangle\leq-\frac{1}{2}\sum_{k=1}^{n-1}l_k\langle E^n,E^n\rangle-\frac{1}{2}\sum_{k=1}^{n-1}l_k\langle E^{n-k},E^{n-k}\rangle
+\frac{C_1\tau^\alpha}{2}(\tau+h^4)^2,
\end{equation}
where
$C_1=\frac{\widetilde C^2l}{2K_\alpha}$.
From Lemma~\ref{lem:coeff}, there exists $$0<-\frac{1}{2}\sum_{k=1}^{n-1}l_k<\frac{1}{2},$$
then it is derived immediately from~\eqref{eq:forr} that
\begin{equation}\label{eq:ineq}
\langle E^n,E^n\rangle\leq-\sum_{k=1}^{n-1}l_k\langle E^{n-k},E^{n-k}\rangle+C_1\tau^\alpha(\tau+h^4)^2.
\end{equation}

In the following, we prove
\begin{equation}\label{eq:inequ}
\langle E^n,E^n\rangle\leq\left(\sum_{k=0}^{n-1}l_k\right)^{-1}C_1\tau^\alpha(\tau+h^4)^2
\end{equation}
by mathematical induction.

For $n=1$,~\eqref{eq:inequ} holds by~\eqref{eq:ineq}. Suppose $$\langle E^s,E^s\rangle\leq\left(\sum_{k=0}^{s-1}l_k\right)^{-1}C_1\tau^\alpha(\tau+h^4)^2$$
when $s=1,2,\ldots,n-1$.
Then for $s=n$, by~\eqref{eq:ineq} and the assumption we conclude that
\begin{equation}\label{eq:estif}
\begin{aligned}
\langle E^n,E^n\rangle &\leq-\sum_{k=1}^{n-1}l_k\left(\sum_{m=0}^{n-k-1}l_m\right)^{-1}C_1\tau^\alpha\left(\tau+h^4\right)^2
+C_1\tau^\alpha\left(\tau+h^4\right)^2\\
&\leq-\left(\sum_{k=0}^{n-1}l_k\right)^{-1}\sum_{k=1}^{n-1}l_kC_1\tau^\alpha\left(\tau+h^4\right)^2+C_1\tau^\alpha(\tau+h^4)^2\\
&=\left(1-\frac{\sum_{k=1}^{n-1}l_k}{\sum_{k=0}^{n-1}l_k}\right)C_1\tau^\alpha(\tau+h^4)^2\\
&=\left(\sum_{k=0}^{n-1}l_k\right)^{-1}C_1\tau^\alpha(\tau+h^4)^2.
\end{aligned}
\end{equation}

Finally, according to~\eqref{eq:estif},~\eqref{eq:estic}, Lemma~\ref{lem:verin} and Lemma~\ref{lem:verinf}, we derive
\begin{equation}
\begin{aligned}
\| E^n\|_\infty^2&\leq\frac{l}{4}\|\delta_x E^n\|^2\\
&\leq\frac{3l}{8}\langle E^n,E^n\rangle\\
&\leq\frac{3l}{8}\left(\sum_{k=0}^{n-1}l_k\right)^{-1}C_1\tau^\alpha(\tau+h^4)^2\\
&\leq\frac{3l}{8}n^\alpha\Gamma(1-\alpha)C_1\tau^\alpha(\tau+h^4)^2\\
&\leq\lbrack C\left(\tau+h^4\right)\rbrack^2,
\end{aligned}
\end{equation}
where $C=\sqrt{\frac{3l}{8}T^\alpha\Gamma(1-\alpha)C_1}$.
\end{proof}

\section{Numerical examples}\label{sec:examples}
In this section, we consider some numerical examples to demonstrate the
effectiveness of the schemes, and verify the theoretical results including convergence orders and
numerical stability.
The discrete $L^\infty$ norm is used to measure the numerical errors,
which makes the results stronger than the discrete $L^2$ norm does.

In the following examples, denote $i=\sqrt{-1}$, and we choose $U(x)=1$ and $x$, respectively.
\begin{example}\label{exam:a}
For the backward fractional Feynman-Kac equation~\eqref{eq:equation} on the finite domain $0<x<1$,
$0<t\leq 1$,
take
$K_\alpha=0.5$, $U(x)=1$,
$\rho=1+i$,
the forcing function
\begin{equation}
\begin{aligned}
f(x,t)=&\frac{\Gamma(4+\alpha)}{\Gamma(4)}e^{-\rho t}t^3\sin(\pi x)
+K_\alpha \pi^2\left(t^{3+\alpha}+1\right)e^{-\rho t}\sin(\pi x),\nonumber
\end{aligned}
\end{equation}
with the initial condition $P(x,0)=\sin(\pi x)$, and the boundary conditions $P(0,t)=P(1,t)=0$.
The exact solution is given by
\begin{equation}
P(x,t)=e^{-\rho t}\left(t^{3+\alpha}+1\right)\sin(\pi x).\nonumber
\end{equation}
\end{example}

\begin{table}[h!]
\caption{The maximum errors and convergence orders for Example~\ref{exam:a} in temporal direction with $q=1$ and $h=1/1000$.}\label{table:a}
  \centering
  \begin{tabular}{ccccccc}
    \toprule
    $\tau$ & $\alpha=0.2 $  & Rate   & $\alpha=0.5 $ & Rate   & $ \alpha=0.8$ & Rate \\
    \midrule
      1/10   & 0.0023         &      - & 0.0080        &      - & 0.0182        &      -\\
      1/20   & 0.0011         & 1.0641 & 0.0041        & 0.9644 & 0.0093        & 0.9686\\
      1/40   & 5.7830e-004    & 0.9276 & 0.0020        & 1.0356 & 0.0047        & 0.9846\\
      1/80   & 2.9000e-004    & 0.9958 & 0.0010        & 1.0000 & 0.0024        & 0.9696\\
    \bottomrule
  \end{tabular}
\end{table}

\begin{table}[h!]
\caption{The maximum errors and convergence orders for Example~\ref{exam:a} in spatial direction with $q=1$ and $\tau=h^4$.}
  \centering
  \begin{tabular}{ccccccc}
    \toprule
    $h$ & $\alpha=0.2 $  & Rate   & $\alpha=0.5 $ & Rate   & $ \alpha=0.8$ & Rate \\
    \midrule
      1/2   & 0.0184         &      - & 0.0217        &      - & 0.0279        &      -\\
      1/4   & 0.0011         & 4.0641 & 0.0013        & 4.0611 & 0.0017        & 4.0367\\
      1/8   & 6.5910e-005    & 4.0609 & 7.8904e-005   & 4.0423 & 1.0316e-004   & 4.0426\\
      1/10  & 2.6942e-005    & 4.0091 & 3.2266e-005   & 4.0074 & 4.2201e-005   & 4.0057\\
    \bottomrule
  \end{tabular}
\end{table}

\begin{table}[h!]
\caption{The maximum errors and convergence orders for Example~\ref{exam:a} in spatial direction with $q=2$ and $\tau=h^2$.}
  \centering
  \begin{tabular}{ccccccc}
    \toprule
    $h$ & $\alpha=0.2 $  & Rate   & $\alpha=0.5 $ & Rate   & $ \alpha=0.8$ & Rate \\
    \midrule
      1/10   & 2.7752e-005    &      - & 3.5587e-005        &      - & 5.1361e-005        &      -\\
      1/20   & 1.7312e-006    & 4.0027 & 2.2245e-006        & 3.9998 & 3.2177e-006        & 3.9966\\
      1/40   & 1.0815e-007    & 4.0007 & 1.3903e-007        & 4.0000 & 2.0122e-007        & 3.9992\\
      1/80   & 6.7586e-009    & 4.0002 & 8.6896e-009        & 4.0000 & 1.2578e-008        & 3.9998\\
    \bottomrule
  \end{tabular}
\end{table}

\begin{table}[h!]
\caption{The maximum errors and convergence orders for Example~\ref{exam:a} in temporal direction with $q=3$ and $h=1/1000$.}\label{table:d}
  \centering
  \begin{tabular}{ccccccc}
    \toprule
    $\tau$ & $\alpha=0.2 $  & Rate   & $\alpha=0.5 $ & Rate   & $ \alpha=0.8$ & Rate \\
    \midrule
      1/10   & 2.4487e-005    &      -    & 9.5460e-005   &      -     & 2.4802e-004   &      -\\
      1/20   & 3.0669e-006    & 2.9972    & 1.1984e-005   & 2.9938     & 3.1167e-005   & 2.9924\\
      1/40   & 3.8372e-007    & 2.9987    & 1.5009e-006   & 2.9972     & 3.9044e-006   & 2.9968\\
      1/80   & 4.7992e-008    & 2.9992    & 1.8778e-007   & 2.9987     & 4.8854e-007   & 2.9986\\
      1/160  & 6.0077e-009    & 2.9979    & 2.3470e-008   & 3.0002     & 6.1094e-008   & 2.9994\\
    \bottomrule
  \end{tabular}
\end{table}

Table~\ref{table:a}$-$Table~\ref{table:d} illustrate part of the numerical results to verify the convergence orders
of schemes~\eqref{eq:fulldis}$-$\eqref{eq:disbound} for Example~\ref{exam:a}.
Other cases also coincide with the results derived in Section~\ref{sec:scheme}, though we omit them here.

\begin{example}\label{exam:b}
For the backward fractional Feynman-Kac equation~\eqref{eq:equation} on the finite domain $0<x<1$,
$0<t\leq 1$,
take $K_\alpha=0.5$, $U(x)=x$,
$\rho=1+i$,
the forcing function
\begin{equation}
\begin{aligned}
f(x,t)=&\frac{\Gamma(4+\alpha)}{\Gamma(4)}e^{-\rho xt}t^3\sin(\pi x)\\
&-K_\alpha e^{-\rho xt}\left(t^{3+\alpha}+1\right)\left(\rho^2t^2\sin(\pi x)-2\pi\rho t\cos(\pi x)-\pi^2\sin(\pi x)\right),\nonumber
\end{aligned}
\end{equation}
with the initial condition $P(x,0)=\sin(\pi x)$, and the boundary conditions $P(0,t)=P(1,t)=0$.
The exact solution is given by
\begin{equation}
P(x,t)=e^{-\rho xt}\left(t^{3+\alpha}+1\right)\sin(\pi x).\nonumber
\end{equation}
\end{example}

\begin{table}[h!]
\caption{The maximum errors and convergence orders for Example~\ref{exam:b} in temporal direction with $q=1$ and $h=1/1000$.}\label{table:f}
  \centering
  \begin{tabular}{ccccccc}
    \toprule
    $\tau$ & $\alpha=0.2 $  & Rate   & $\alpha=0.5 $ & Rate   & $ \alpha=0.8$ & Rate \\
    \midrule
      1/10   & 0.0038         &      - & 0.0132        &      - & 0.0301        &      -\\
      1/20   & 0.0019         & 1.0000 & 0.0067        & 0.9783 & 0.0154        & 0.9668\\
      1/40   & 9.5861e-004    & 0.9870 & 0.0034        & 0.9786 & 0.0078        & 0.9814\\
      1/80   & 4.8072e-004    & 0.9957 & 0.0017        & 1.0000 & 0.0039        & 1.0000\\
    \bottomrule
  \end{tabular}
\end{table}

\begin{table}[h!]
\caption{The maximum errors and convergence orders for Example~\ref{exam:b} in spatial direction with $q=1$ and $\tau=h^4$.}
  \centering
  \begin{tabular}{ccccccc}
    \toprule
    $h$ & $\alpha=0.2 $  & Rate   & $\alpha=0.5 $ & Rate   & $ \alpha=0.8$ & Rate \\
    \midrule
      1/2   & 0.0771         &      -  & 0.0731        &      -  & 0.0713        &      - \\
      1/4   & 0.0043         & 4.1643  & 0.0040        & 4.1918  & 0.0040        & 4.1558 \\
      1/8   & 2.6053e-004    & 4.0448  & 2.5475e-004   & 3.9728  & 2.6205e-004   & 3.9321 \\
      1/10  & 1.0733e-004    & 3.9742  & 1.0446e-004   & 3.9951  & 1.0689e-004   & 4.0186 \\
    \bottomrule
  \end{tabular}
\end{table}

\begin{table}[h!]
\caption{The maximum errors and convergence orders for Example~\ref{exam:b} in temporal direction with $q=2$ and $h=1/1000$.}
  \centering
  \begin{tabular}{ccccccc}
    \toprule
    $\tau$ & $\alpha=0.2 $  & Rate   & $\alpha=0.5 $ & Rate   & $ \alpha=0.8$ & Rate \\
    \midrule
      1/10   & 4.8365e-004    &      -    & 0.0018        &      -     & 0.0043        &      -\\
      1/20   & 1.2599e-004    & 1.9407    & 4.6403e-004   & 1.9557     & 0.0011        & 1.9668\\
      1/40   & 3.2132e-005    & 1.9712    & 1.1850e-004   & 1.9693     & 2.8775e-004   & 1.9346\\
      1/80   & 8.1125e-006    & 1.9858    & 2.9935e-005   & 1.9850     & 7.2748e-005   & 1.9838\\
      1/160  & 2.0381e-006    & 1.9929    & 7.5227e-006   & 1.9925     & 1.8288e-005   & 1.9920\\
    \bottomrule
  \end{tabular}
\end{table}

\begin{table}[h!]
\caption{The maximum errors and convergence orders for Example~\ref{exam:b} in spatial direction with $q=2$ and $\tau=h^2$.}
  \centering
  \begin{tabular}{ccccccc}
    \toprule
    $h$ & $\alpha=0.2 $  & Rate   & $\alpha=0.5 $ & Rate   & $ \alpha=0.8$ & Rate \\
    \midrule
      1/10   & 1.0794e-004    &      -  & 1.0761e-004   &      -  & 1.1799e-004  &      - \\
      1/20   & 6.7206e-006    & 4.0055  & 6.7031e-006   & 4.0048  & 7.3687e-006  & 4.0011 \\
      1/40   & 4.2056e-007    & 3.9982  & 4.1859e-007   & 4.0012  & 4.6134e-007  & 3.9975 \\
      1/80   & 2.6277e-008    & 4.0004  & 2.6157e-008   & 4.0003  & 2.8833e-008  & 4.0000 \\
    \bottomrule
  \end{tabular}
\end{table}

\begin{table}[h!]
\caption{The maximum errors and convergence orders for Example~\ref{exam:b} in temporal direction with $q=3$ and $h=1/1000$.}
  \centering
  \begin{tabular}{ccccccc}
    \toprule
    $\tau$ & $\alpha=0.2 $  & Rate   & $\alpha=0.5 $ & Rate   & $ \alpha=0.8$ & Rate \\
    \midrule
      1/10   & 4.0581e-005    &      -   & 1.5822e-004     &      -   & 4.1108e-004       &      -\\
      1/20   & 5.0826e-006    & 2.9972   & 1.9862e-005     & 2.9938   & 5.1656e-005       & 2.9924\\
      1/40   & 6.3591e-007    & 2.9987   & 2.4876e-006     & 2.9972   & 6.4710e-006       & 2.9969\\
      1/80   & 7.9535e-008    & 2.9992   & 3.1123e-007     & 2.9987   & 8.0969e-007       & 2.9985\\
      1/160  & 9.9561e-009    & 2.9979   & 3.8898e-008     & 3.0002   & 1.0126e-007       & 2.9993\\
    \bottomrule
  \end{tabular}
\end{table}

\begin{table}[h!]
\caption{The maximum errors and convergence orders for Example~\ref{exam:b} in spatial direction with $q=3$ and $\tau=h^{4/3}$.}
  \centering
  \begin{tabular}{ccccccc}
    \toprule
    $h$ & $\alpha=0.2 $  & Rate   & $\alpha=0.5 $ & Rate   & $ \alpha=0.8$ & Rate \\
    \midrule
      1/8    & 2.6076e-004    &      -  & 2.5785e-004   &      -  & 2.9742e-004  &      - \\
      1/16   & 1.6025e-005    & 4.0243  & 1.5697e-005   & 4.0380  & 1.7149e-005  & 4.0262 \\
      1/32   & 1.0054e-006    & 3.9945  & 9.8854e-007   & 3.9890  & 1.0750e-006  & 3.9957 \\
      1/64   & 6.3848e-008    & 3.9770  & 6.2690e-008   & 3.9790  & 6.7897e-008  & 3.9848 \\
      1/128  &3.9893e-009     & 4.0004  & 3.9163e-009   & 4.0007  & 4.2434e-009  & 4.0001 \\
    \bottomrule
  \end{tabular}
\end{table}

\begin{table}[h!]
\caption{The maximum errors and convergence orders for Example~\ref{exam:b} in both temporal and spatial directions with $q=4$ and $\tau=h$.}\label{table:g}
  \centering
  \begin{tabular}{ccccccc}
    \toprule
    $h$ & $\alpha=0.2 $  & Rate   & $\alpha=0.5 $ & Rate   & $ \alpha=0.8$ & Rate \\
    \midrule
      1/10   & 1.0568e-004    &      -  & 9.7935e-005   &      -   & 8.9182e-005   &      - \\
      1/20   & 6.5976e-006    & 4.0016  & 6.0897e-006   & 4.0074   & 5.5313e-006   & 4.0111 \\
      1/40   & 4.1219e-007    & 4.0006  & 3.8077e-007   & 3.9994   & 3.4481e-007   & 4.0037 \\
      1/80   & 2.5760e-008    & 4.0001  & 2.3789e-008   & 4.0006   & 2.1531e-008   & 4.0013 \\
      1/160  & 1.6098e-009    & 4.0002  &1.4867e-009    & 4.0001   & 1.3451e-009   & 4.0006 \\
    \bottomrule
  \end{tabular}
\end{table}

Table \ref{table:f}$-$Table \ref{table:g} show the maximum errors and convergence orders
for Example~\ref{exam:b}
in both temporal and spatial directions, respectively,
which confirm the global truncation error of schemes~\eqref{eq:fulldis}$-$\eqref{eq:disbound}
is $\mathcal{O}(\tau^q+h^4)$ for $q=1,2,3,4$.

\section{Conclusion}\label{sec:conclusion}

In this paper, we construct compact finite difference schemes for solving the backward fractional Feynman-Kac
equation by its equivalent form, where the $q$-th $(q=1,2,3,4)$ order approximation operators for fractional substantial derivative
are used in temporal direction, and fourth order
compact difference operator for the spatial derivative, respectively.
By introducing a new inner product,
we prove rigorously that the scheme is unconditionally stable and convergent
in maximum norm when $q=1$.
For all the schemes proposed,
from first to fourth order in temporal direction,
abundant examples are performed to verify the theoretical analysis and their effectiveness.

\section*{Acknowledgements}
This research was supported by National Natural Science Foundations of China (No.11471262).
The authors would like to express their gratitude to the referees for their very helpful
comments and suggestions on the manuscript.

\bibliography{Reference}

\begin{thebibliography}{10}
\expandafter\ifx\csname url\endcsname\relax
  \def\url#1{\texttt{#1}}\fi
\expandafter\ifx\csname urlprefix\endcsname\relax\def\urlprefix{URL }\fi
\expandafter\ifx\csname href\endcsname\relax
  \def\href#1#2{#2} \def\path#1{#1}\fi

\bibitem{comtet2005functionals}
A.~Comtet, J.~Desbois, C.~Texier, Functionals of {Brownian} motion,
  localization and metric graphs, J. Phys. A: Math. Gen. 38~(37) (2005) R341.

\bibitem{FoltinOerdingRaczEtAl1994}
G.~Foltin, K.~Oerding, Z.~R{\'a}cz, R.~Workman, R.~Zia, Width distribution for
  random-walk interfaces, Phys. Rev. E 50~(2) (1994) R639.

\bibitem{HummerSzabo2001}
G.~Hummer, A.~Szabo, Free energy reconstruction from nonequilibrium
  single-molecule pulling experiments, Proc. Natl. Acad. Sci. USA 98~(7) (2001)
  3658--3661.

\bibitem{BauleFriedrich2006}
A.~Baule, R.~Friedrich, Investigation of a generalized {Obukhov} model for
  turbulence, Phys. Lett. A 350~(3) (2006) 167--173.

\bibitem{MajumdarBray2002}
S.~Majumdar, A.~Bray, Large-deviation functions for nonlinear functionals of a
  {Gaussian} stationary {Markov} process, Phys. Rev. E 65~(5) (2002) 051112.

\bibitem{Yor2001}
M.~Yor, Exponential functionals of {Brownian} motion and disordered systems,
  in: Exponential Functionals of Brownian Motion and Related Processes,
  Springer, 2001, pp. 182--203.

\bibitem{Yor2012}
M.~Yor, Exponential functionals of Brownian motion and related processes,
  Springer Science \& Business Media, 2012.

\bibitem{kac1949}
M.~Kac, On distributions of certain {Wiener} functionals, Trans. Amer. Math.
  Soc. 65~(1) (1949) 1--13.

\bibitem{Agmon1984}
N.~Agmon, Residence times in diffusion processes, J. Chem. Phys. 81~(8) (1984)
  3644--3647.

\bibitem{CarmiTurgemanBarkai2010}
S.~Carmi, L.~Turgeman, E.~Barkai, On distributions of functionals of anomalous
  diffusion paths, J. Stat. Phys. 141~(6) (2010) 1071--1092.

\bibitem{CarmiBarkai2011}
S.~Carmi, E.~Barkai, Fractional {Feynman-Kac} equation for weak ergodicity
  breaking, Phys. Rev. E 84~(6) (2011) 061104.

\bibitem{TurgemanCarmiBarkai2009}
L.~Turgeman, S.~Carmi, E.~Barkai, Fractional {Feynman-Kac} equation for
  {non-Brownian} functionals, Phys. Rev. Lett. 103~(19) (2009) 190201.

\bibitem{FriedrichJenkoBauleEtAl2006}
R.~Friedrich, F.~Jenko, A.~Baule, S.~Eule, Anomalous diffusion of inertial,
  weakly damped particles, Phys. Rev. Lett. 96~(23) (2006) 230601.

\bibitem{ChenDengWu2013}
M.~Chen, W.~Deng, Y.~Wu, Superlinearly convergent algorithms for the
  two-dimensional space--time {Caputo--Riesz} fractional diffusion equation,
  Appl. Numer. Math. 70 (2013) 22--41.

\bibitem{ChenLiuZhuangEtAl2009}
S.~Chen, F.~Liu, P.~Zhuang, V.~Anh, Finite difference approximations for the
  fractional {Fokker--Planck} equation, Appl. Math. Model. 33~(1) (2009)
  256--273.

\bibitem{MeerschaertTadjeran2006}
M.~Meerschaert, C.~Tadjeran, Finite difference approximations for two-sided
  space-fractional partial differential equations, Appl. Numer. Math. 56~(1)
  (2006) 80--90.

\bibitem{SunWu2006}
Z.~Sun, X.~Wu, A fully discrete difference scheme for a diffusion-wave system,
  Appl. Numer. Math. 56~(2) (2006) 193--209.

\bibitem{Gao2011}
G.~Gao, Z.~Sun, A compact finite difference scheme for the fractional
  sub-diffusion equations, J. Comput. Phys. 230~(3) (2011) 586--595.

\bibitem{Deng2008}
W.~Deng, Finite element method for the space and time fractional
  {Fokker-Planck} equation, SIAM J. Numer. Anal. 47~(1) (2008) 204--226.

\bibitem{Ervin2006}
V.~Ervin, J.~Roop, \href{http://dx.doi.org/10.1002/num.20112}{Variational
  formulation for the stationary fractional advection dispersion equation},
  Numer. Methods Partial Differential Equations 22~(3) (2006) 558--576.
\newblock \href {http://dx.doi.org/10.1002/num.20112}
  {\path{doi:10.1002/num.20112}}.
\newline\urlprefix\url{http://dx.doi.org/10.1002/num.20112}

\bibitem{JiangMa2011}
Y.~Jiang, J.~Ma, High-order finite element methods for time-fractional partial
  differential equations, J. Comput. Appl. Math. 235~(11) (2011) 3285--3290.

\bibitem{LiZengLiu2012}
C.~Li, F.~Zeng, F.~Liu, Spectral approximations to the fractional integral and
  derivative, Fract. Calc. Appl. Anal. 15~(3) (2012) 383--406.

\bibitem{LiXu2009}
X.~Li, C.~Xu, A space-time spectral method for the time fractional diffusion
  equation, SIAM J. Numer. Anal. 47~(3) (2009) 2108--2131.

\bibitem{DengChenBarkai2015}
W.~Deng, M.~Chen, E.~Barkai, Numerical algorithms for the forward and backward
  fractional {Feynman--Kac} equations, J. Sci. Comput. 62~(3) (2015) 718--746.

\bibitem{ChenDeng2013}
M.~Chen, W.~Deng, Discretized fractional substantial calculus, ESAIM Math.
  Model. Numer. Anal. 49~(2) (2015) 373--394.
\newblock \href {http://arxiv.org/abs/1310.3086} {\path{arXiv:1310.3086}},
  \href {http://dx.doi.org/10.1051/m2an/2014037}
  {\path{doi:10.1051/m2an/2014037}}.

\bibitem{SamarskiiAndreev1976}
A.~Samarskii, V.~Andreev, Difference methods for elliptic equations, Nauka,
  Moscow, 1976.

\bibitem{LiaoSun2010}
H.~Liao, Z.~Sun, Maximum norm error bounds of {ADI} and compact {ADI} methods
  for solving parabolic equations, Numer. Methods Partial Differential
  Equations 26~(1) (2010) 37--60.

\bibitem{ChenDeng2013a}
M.~Chen, W.~Deng, {WSLD} operators {II}: the new fourth order difference
  approximations for space {Riemann-Liouville} derivative, Commun. Comput.
  Phys. 16 (2014) 516--540.
\newblock \href {http://arxiv.org/abs/1306.5900} {\path{arXiv:1306.5900}},
  \href {http://dx.doi.org/10.4208/cicp.120713.280214a}
  {\path{doi:10.4208/cicp.120713.280214a}}.

\end{thebibliography}
\end{document}